\newcommand{\IR}{\mathbb{R}}
\newcommand{\IC}{\mathbb{C}}
\newcommand{\IZ}{\mathbb{Z}}
\newcommand{\IN}{\mathbb{N}}
\newcommand{\IT}{\mathbb{T}}
\newcommand{\x}{\mathbf {x}}
\newcommand{\bt}{\mathbf {t}}
\newcommand{\bk}{\mathbf {k}}
\newtheorem{Theorem}{Theorem}[section]
\newtheorem{lema}{Lemma}[section]
\newtheorem{prop}{Proposition}[section]
\numberwithin{equation}{section}
\newtheorem*{THM}{Theorem}
\begin{document}

\title{On the Structure of Finitely Generated Shift-Invariant Subspaces }

\author{K. S. Kazarian}

\address{Departamento de Matem\'{a}ticas\\ Universidad Aut\'{o}noma de Madrid\\ 28049 Madrid,
Spain  }
\email{kazaros.kazarian@uam.es}

\keywords{ Finitely generated shift invariant subspace,  Fourier
transform, weighted norm spaces, orthogonalization procedure for generators, summation basis.}

\thanks{This research did not receive any specific grant from funding agencies in the public, commercial, or not-for-profit sectors}

\maketitle

\begin{abstract}{ A characterization of finitely generated shift-invariant subspaces is given when generators are $g-$minimal. An algorithm is given for the determination of the coefficients in the well known representation of the Fourier transform of an element of the finitely generated shift-invariant subspace as a linear combination of Fourier transformations of generators. An estimate for the norms of those coefficients is derived. For the proof an orthogonalization procedure for generators is used which reminds the well known Gram-Schmidt orthogonalization process. When the generators are compactly supported functions on $\IR$   after the orthogonalization procedure for generators we get a sort of summation basis. }
\end{abstract}

\


\section{ Introduction}

  Further in the paper
$\IT^n = \IR^n/\IZ^{n}$ and  $F\in L^2(\IT^n)$ means that $F$ is defined on
 the whole space $\IR^n$ as $1-$periodic complex-valued function with respect to
 all variables. Such a function is called $\IZ^{n}-$ periodic function.
 If $w\geq 0$ is a measurable function on a measurable set $E \subseteq \IR^{n}$  then we say that $\phi \in L^{2}(E,w)$ if $\phi:E\rightarrow \IC$ is measurable on $E$ and the norm is defined by
\[
 \| \phi \|_{L^{2}(E,w)}:= \left( \int_{E} |\phi(\bt)|^{2} w(\bt) d\bt\right)^{\frac{1}{2}} < +\infty.
 \]
 In the above notation $w$ will be omitted if  $w\equiv 1$.

 With some abuse of the notation it will be interpreted also that $\IT^n$ is the unit cube
 $[0,1)^{n}.$
  The Lebesgue
measure of a set $E\subset\IR^n$ will be denoted as $|E|_n$. The characteristic function of the set $E$ is denoted by $\chi_{E}$. It is also supposed that $\frac{0}{0} = 0$.
A closed subspace $V$ of $L^{2}(\IR^{n})$ is called shift-invariant if for any $f\in V$ and for all $\bk \in \IZ^{n}$ $f(\cdot + \bk) \in V.$
If $\Omega$ is a subset of $L^{2}(\IR^{n})$ then we denote by $S(\Omega)$ the shift-invariant subspace
generated by $\Omega$,
\[
S(\Omega) =  \overline{\mbox{span}} \{\varphi(\cdot + \bk) : \bk \in \IZ^{n}, \varphi\in \Omega \}.
\]
If $\Omega$ consists of just  one element $\varphi$ then the corresponding subspace is called a principle shift invariant space (PSI) and is denoted  by  $S(\varphi)$.
If $\text{card}\, \Omega < \infty$ then $S(\Omega)$ is called finitely generated shift-invariant, or FSI space. A characterization of FSI spaces in
$L^{2}(\IR^{n})$ is given in \cite{BDR:1}, \cite{BDR:2}.
Further on the Fourier transform, an isometry from $L^2(\mathbb{R}^{n})$ onto itself, is defined so that the image
of a function $f\in L^1(\mathbb{R}^{n}) \cap
 L^2(\mathbb{R}^{n})$ is given by
 \[
 \widehat{f}({\mathbf y}) = \int_{\IR^n} f ({\mathbf
 x}) e^{-2\pi i \langle{\mathbf x}, {\mathbf y}\rangle} d{\mathbf x}.
 \]
For $f,g\in L^2(\mathbb{R}^{n})$ using the  definition given in  \cite{BDR:1} we put
 \[
 [{f}, {g}](\bt) = \sum_{{\bk}\in \IZ^{n} } {f}(\bt + \bk) {\overline{g(\bt + \bk)}}.
 \]
 It is easy to observe that $[{f}, {g}] \in L^{1}(\IT^{n})$ and
 \[
 |[{f}, {g}](\bt)|^{2} \leq [{f}, {f}](\bt) [{g}, {g}](\bt)\qquad \text{almost everywhere on $\IT^{n}$}.
 \]
 When it will be  convenient we also use the notation $W_{g} (\bt) = [\widehat{g}, \widehat{g}](\bt)$ which is a well defined $\IZ^{n}-$periodic non-negative function. The orthogonal projection of a function $f\in L^2(\mathbb{R}^{n})$ onto $ S(\Omega)$ is denoted by $P_{\Omega}(f)$.

  The following theorem was proved in \cite{BDR:1}.
\begin{THM}[BDR]\label{T:A}
For any finite subset $G \subset L^2(\mathbb{R}^{n})$ and any $f\in L^2(\mathbb{R}^{n}),$ $f\in S(G)$ if and only if
\[
\widehat{f} = \sum_{\phi \in G} m_{\phi} \widehat{\phi}
\]
for some $\IZ^{n}-$periodic functions $m_{\phi}$.
\end{THM}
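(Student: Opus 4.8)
The plan is to transport everything to the Fourier side, where shifts become modulations, and then to recognize the statement as an identification of two subspaces of $L^2(\IR^n)$. Writing $g=\phi(\cdot+\bk)$ one computes $\widehat{g}(\y)=e^{2\pi i\langle \bk,\y\rangle}\widehat{\phi}(\y)$, so that, since the Fourier transform is an isometry onto $L^2(\IR^n)$, the space $\widehat{S(G)}$ is the closed span of the functions $e^{2\pi i\langle \bk,\cdot\rangle}\widehat{\phi}$, $\bk\in\IZ^n$, $\phi\in G$. Thus Theorem~\ref{T:A} is equivalent to the assertion that this closed span coincides with the set $\mathcal M$ of all finite sums $\sum_{\phi\in G}m_\phi\widehat{\phi}$ with $\IZ^n$-periodic coefficients $m_\phi$ (of necessarily finite weighted norm). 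The engine throughout is the periodization identity: using the $\IZ^n$-periodicity of the $m_\phi$ together with the definition of the bracket, I would establish
\[
\Big\|\sum_{\phi\in G}m_\phi\widehat{\phi}\Big\|_{L^2(\IR^n)}^2=\int_{\IT^n}\sum_{\phi,\psi\in G}m_\phi(\bt)\,\overline{m_\psi(\bt)}\,[\widehat{\phi},\widehat{\psi}](\bt)\,d\bt,
\]
so that the $L^2(\IR^n)$-norm of an element of $\mathcal M$ equals the $L^2(\IT^n)$-integral of the quadratic form attached to the Gramian $\mathbf G(\bt)=\big([\widehat{\phi},\widehat{\psi}](\bt)\big)_{\phi,\psi\in G}$.

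For the sufficiency direction ($\mathcal M\subseteq\widehat{S(G)}$) the identity does most of the work. When every $m_\phi$ is a trigonometric polynomial, $\sum_\phi m_\phi\widehat{\phi}$ is exactly the Fourier transform of a finite linear combination of shifts $\phi(\cdot+\bk)$, hence lies in $\widehat{S(G)}$. Since the quadratic form above is dominated by $N\sum_{\phi}|m_\phi|^2 W_\phi$ with $N=\mathrm{card}\,G$, and each $W_\phi\in L^1(\IT^n)$, I would approximate each $m_\phi$ in $L^2(\IT^n,W_\phi)$ by trigonometric polynomials (dense there because the weight is an integrable measure on the compact $\IT^n$); the identity then upgrades this to $L^2(\IR^n)$-convergence of the synthesized functions, placing their limit $\sum_\phi m_\phi\widehat{\phi}$ in the closed subspace $\widehat{S(G)}$. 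This already yields the ``if'' part of the theorem.

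The necessity direction is the substantive one and amounts to proving that $\mathcal M$ is closed, for which I would fiberize. Identify $F\in L^2(\IR^n)$ with the field $\bt\mapsto F(\bt+\cdot)\in\ell^2(\IZ^n)$, so that $\|F\|^2=\int_{\IT^n}\|F(\bt+\cdot)\|_{\ell^2}^2\,d\bt$; under this identification an element of $\mathcal M$ is precisely a field $T_\bt\mathbf m(\bt)$, where $T_\bt:\IC^{N}\to\ell^2(\IZ^n)$ is the synthesis operator $\mathbf c\mapsto\sum_\phi c_\phi\widehat{\phi}(\bt+\cdot)$ and $\mathbf m(\bt)=(m_\phi(\bt))_\phi$. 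One checks $T_\bt^{*}T_\bt=\mathbf G(\bt)$, so the fiber $J(\bt):=\mathrm{ran}\,T_\bt=\mathrm{span}\{\widehat{\phi}(\bt+\cdot):\phi\in G\}$ is finite-dimensional, hence closed. Given $f\in S(G)$, pick $f_n\in\mathrm{span}\{\phi(\cdot+\bk)\}$ with $\widehat{f}_n\to\widehat{f}$ in $L^2(\IR^n)$; each $\widehat{f}_n(\bt+\cdot)\in J(\bt)$ for a.e.\ $\bt$, and passing to a subsequence gives $\widehat{f}_n(\bt+\cdot)\to\widehat{f}(\bt+\cdot)$ in $\ell^2(\IZ^n)$ for a.e.\ $\bt$, whence $\widehat{f}(\bt+\cdot)\in J(\bt)$ a.e.\ by closedness of the fiber. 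The equation $T_\bt\mathbf m(\bt)=\widehat{f}(\bt+\cdot)$ is therefore solvable fiberwise, and the minimal-norm solution $\mathbf m(\bt)=\mathbf G(\bt)^{+}T_\bt^{*}\widehat{f}(\bt+\cdot)$ furnishes $\IZ^n$-periodic coefficients with $\widehat{f}=\sum_\phi m_\phi\widehat{\phi}$; their weighted norm is automatically finite, since the periodization identity gives $\int_{\IT^n}\mathbf m^{*}\mathbf G\,\mathbf m\,d\bt=\int_{\IT^n}\|\widehat{f}(\bt+\cdot)\|_{\ell^2}^2\,d\bt=\|f\|^2<\infty$.

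The main obstacle is concentrated in two points of the necessity argument. First, the finite-dimensionality and hence closedness of each fiber $J(\bt)$ is exactly where finite generation is used, for it guarantees that the $\ell^2$-limit of the fibers of $\widehat{f}_n$ cannot escape the span of the generators; an infinite generating set need not produce closed fibers, and the clean representation would fail. Second, one must verify that $\bt\mapsto\mathbf m(\bt)$ can be chosen \emph{measurably}: I would invoke that the Moore–Penrose pseudo-inverse depends Borel-measurably on the matrix entries (the rank is constant on measurable strata of $\IT^n$, and on each stratum $\mathbf G^{+}$ is a rational expression in the entries), so $\bt\mapsto\mathbf G(\bt)^{+}$ is measurable and $\mathbf m$ is measurable. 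The remaining bookkeeping—periodicity and finite weighted norm of the selected $m_\phi$—is not an estimate to be fought for but a consequence of the periodization identity, which pins the weighted norm of $\mathbf m$ to $\|f\|^2$.
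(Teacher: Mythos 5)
Your necessity argument is essentially correct, and it is worth noting that the paper offers no proof to compare against: Theorem~\ref{T:A} is quoted from de Boor--DeVore--Ron, whose proof is precisely the range-function/fiberization argument you reconstruct. Your chain --- fibers $\widehat{f}_n(\bt+\cdot)$ lie in the finite-dimensional, hence closed, span $J(\bt)$; a.e.\ convergence of fibers along a subsequence; fiberwise solvability via $\mathbf{m}(\bt)=\mathbf{G}(\bt)^{+}T_\bt^{*}\widehat{f}(\bt+\cdot)$ --- is sound, and the measurability of $\bt\mapsto\mathbf{G}(\bt)^{+}$ can be handled even more cleanly than by rank stratification, via the pointwise limit $\mathbf{G}^{+}=\lim_{\delta\to 0^{+}}(\mathbf{G}^{2}+\delta I)^{-1}\mathbf{G}$ of measurable matrix functions.

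The genuine gap is in your sufficiency direction. The theorem allows \emph{arbitrary} measurable $\IZ^{n}$-periodic $m_\phi$, constrained only by $\sum_\phi m_\phi\widehat{\phi}\in L^{2}(\IR^{n})$; your approximation argument requires each $m_\phi\in L^{2}(\IT^{n},W_\phi)$ separately, and your parenthetical claim that this finiteness is automatic (repeated in your closing sentence, ``their weighted norm is automatically finite'') is false. The periodization identity controls only the Gramian quadratic form $\int_{\IT^{n}}\mathbf{m}^{*}\mathbf{G}\,\mathbf{m}\,d\bt$, which does not dominate $\sum_\phi\int_{\IT^{n}}|m_\phi|^{2}W_\phi\,d\bt$ from below; the two agree only when the generators are orthogonal. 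Concretely, if $\widehat{\varphi}_{2}=u\,\widehat{\varphi}_{1}$ with $u$ periodic and nonvanishing, then $m_{1}=M$, $m_{2}=-M/u$ synthesizes $f=0\in S(G)$ while $\int_{\IT^{n}}|M|^{2}W_{\varphi_{1}}=\infty$ for suitable $M$; near-parallel (rather than parallel) fibers of two generators yield examples where even your minimal-norm coefficients have infinite individual weighted norms, since $\mathbf{m}^{*}\mathbf{G}\,\mathbf{m}$ can be as small as the least nonzero eigenvalue of $\mathbf{G}(\bt)$ allows. The fix is short and uses only what you already have: given admissible $\mathbf{m}$, set $E_{M}=\{\bt:\max_\phi|m_\phi(\bt)|\leq M\}$ and $\mathbf{m}_{M}=\mathbf{m}\chi_{E_{M}}$. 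Each $m_{M,\phi}$ is bounded and periodic, hence lies in $L^{2}(\IT^{n},W_\phi)$ because $W_\phi\in L^{1}(\IT^{n})$, so your polynomial-density argument places $\sum_\phi m_{M,\phi}\widehat{\phi}$ in $\widehat{S(G)}$; and since $\mathbf{m}^{*}\mathbf{G}\,\mathbf{m}(\bt)=\|\widehat{f}(\bt+\cdot)\|_{\ell^{2}}^{2}$ is integrable, $\|\widehat{f}-\sum_\phi m_{M,\phi}\widehat{\phi}\|_{L^{2}(\IR^{n})}^{2}=\int_{\IT^{n}\setminus E_{M}}\mathbf{m}^{*}\mathbf{G}\,\mathbf{m}\,d\bt\to 0$ by dominated convergence, so closedness of $\widehat{S(G)}$ concludes. (Alternatively: the residual $r=f-P_{G}(f)$ satisfies $[\widehat{r},\widehat{\phi}]=0$ a.e.\ for all $\phi\in G$, while its fibers lie in $J(\bt)$ by your necessity half, forcing $r=0$.)
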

 Different variations of the following lemma appeared in various publications (see e.g. \cite{D:92}, \cite{BDR:1}, \cite{BDR:2} and \cite{KSK:1}).
\begin{lema}\label{lem:1}
Let $\varphi \in L^2(\IR^n)$
then a function $f$ is in $S(\varphi),$ if and only if there is
a  measurable function $F$ defined on $\IT^{n}$ such that
$
F \in L^2(\IT^{n}, W_{\varphi}),
$
\begin{equation*}\label{eq:1}
\widehat{{f}}({\bt})=F({\bt}) \widehat{\varphi}({\bt})
\quad \mbox{a.e. on}\quad \IR^n;
\end{equation*}
and
\begin{equation*}\label{eq:2}
\|f\|_{L^2(\IR^n)} = \|F\|_{L^2(\IT^n, W_{\varphi})}.
\end{equation*}
\end{lema}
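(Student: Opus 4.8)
The plan is to derive the statement from the BDR Theorem together with a single periodization identity. The BDR Theorem, applied to the one-element set $G=\{\varphi\}$, already tells us that $f\in S(\varphi)$ holds precisely when $\widehat f=F\,\widehat\varphi$ a.e.\ for some $\IZ^n$-periodic function $F$; the extra content of the lemma is the integrability $F\in L^2(\IT^n,W_\varphi)$ and the isometric norm identity. Both of these will fall out of computing $\|f\|_{L^2(\IR^n)}$ in the Fourier domain and folding the integral over $\IR^n$ onto the unit cube $\IT^n=[0,1)^n$. So the core of the argument is the chain of equalities
\[
\|f\|_{L^2(\IR^n)}^2=\int_{\IR^n}|F(\bt)|^2\,|\widehat\varphi(\bt)|^2\,d\bt
=\int_{\IT^n}|F(\bt)|^2\Big(\sum_{\bk\in\IZ^n}|\widehat\varphi(\bt+\bk)|^2\Big)d\bt
=\int_{\IT^n}|F(\bt)|^2\,W_\varphi(\bt)\,d\bt,
\]
where the first step uses that the Fourier transform is an isometry on $L^2(\IR^n)$, the second tiles $\IR^n$ by the lattice translates of $\IT^n$ and invokes the $\IZ^n$-periodicity $F(\bt+\bk)=F(\bt)$, and the third recognizes $\sum_{\bk}|\widehat\varphi(\bt+\bk)|^2=[\widehat\varphi,\widehat\varphi](\bt)=W_\varphi(\bt)$.

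For the direction $f\in S(\varphi)\Rightarrow$ representation, I would invoke the BDR Theorem to obtain a $\IZ^n$-periodic multiplier $F$ with $\widehat f=F\widehat\varphi$; concretely one may take $F=\widehat f/\widehat\varphi$ off the zero set of $\widehat\varphi$ and, using the standing convention $\tfrac00=0$, set $F=0$ where $\widehat\varphi=0$, which is consistent since $\widehat f$ vanishes there too. This $F$ is measurable and $\IZ^n$-periodic, so the displayed chain applies verbatim and yields $\|f\|_{L^2(\IR^n)}^2=\|F\|_{L^2(\IT^n,W_\varphi)}^2$; since the left side is finite, $F\in L^2(\IT^n,W_\varphi)$, which completes this direction together with the norm identity.

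Conversely, given a measurable $F$ on $\IT^n$ (extended $\IZ^n$-periodically) with $F\in L^2(\IT^n,W_\varphi)$ and $\widehat f=F\widehat\varphi$ a.e., the converse half of the BDR Theorem gives $f\in S(\varphi)$, and the same chain of equalities supplies $\|f\|_{L^2(\IR^n)}=\|F\|_{L^2(\IT^n,W_\varphi)}$. A self-contained alternative, bypassing BDR, is to note that each finite combination $\sum_\bk c_\bk\varphi(\cdot+\bk)$ has Fourier transform $p\,\widehat\varphi$ with $p$ a trigonometric polynomial, that the isometry above reads $\|p\widehat\varphi\|_{L^2(\IR^n)}=\|p\|_{L^2(\IT^n,W_\varphi)}$, and that trigonometric polynomials are dense in $L^2(\IT^n,W_\varphi)$ because $W_\varphi\in L^1(\IT^n)$ makes $W_\varphi\,d\bt$ a finite measure; passing to the closure then identifies $S(\varphi)$ with $\{F\widehat\varphi:F\in L^2(\IT^n,W_\varphi)\}$ in the Fourier domain.

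The step requiring the most care is the periodization: interchanging $\sum_{\bk}$ with the integral is legitimate by Tonelli's theorem since the integrand is nonnegative, and one must observe that $F$ is only determined up to sets where $W_\varphi=0$ — equivalently, where all translates $\widehat\varphi(\cdot+\bk)$ vanish — so that redefining $F$ there affects neither the a.e.\ identity $\widehat f=F\widehat\varphi$ nor the weighted norm. These measure-theoretic bookkeeping points, rather than any deep estimate, are the only genuine obstacle.
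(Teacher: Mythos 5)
The paper itself gives no proof of this lemma: it is quoted as known, with citations to \cite{D:92}, \cite{BDR:1}, \cite{BDR:2}, \cite{KSK:1}, so there is no internal argument to compare against step by step. Measured against the standard proof from that literature, your write-up is essentially on target, and your ``self-contained alternative'' is in fact the mechanism the paper tacitly relies on: in the proof of Proposition 2.1 the author approximates $g\in L^{2}(\IT^{n},W_{\varphi})$ by trigonometric polynomials in exactly the way you describe. That alternative is complete as stated: $W_{\varphi}\in L^{1}(\IT^{n})$ makes $W_{\varphi}\,d\bt$ a finite measure, so trigonometric polynomials are dense in $L^{2}(\IT^{n},W_{\varphi})$; the map $F\mapsto F\widehat{\varphi}$ is an isometry of that complete space onto its image, hence the image is closed and coincides with $\widehat{S(\varphi)}$, giving both the representation and the norm identity (this is precisely \eqref{sfin:1}). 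The BDR-based route is also legitimate within this paper, where Theorem BDR is taken as a black box, though it is historically backwards, since lemmas of this type feed into the proof of that theorem; your closure argument is the one to keep if independence is wanted.

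One concrete slip: the ``concrete'' multiplier $F=\widehat{f}/\widehat{\varphi}$ (with $\tfrac{0}{0}=0$) is measurable but in general \emph{not} $\IZ^{n}$-periodic, because the zero set of $\widehat{\varphi}$ need not be invariant under integer translations. At a point $\bt_{0}$ with $\widehat{\varphi}(\bt_{0})=0$ but $\widehat{\varphi}(\bt_{0}+\bk)\neq 0$ one gets $F(\bt_{0})=0$ while $F(\bt_{0}+\bk)=m(\bt_{0})$, which may be nonzero; since the folding step in your displayed chain uses $F(\bt+\bk)=F(\bt)$, the chain does not apply ``verbatim'' to this $F$. The repair is immediate and costless: either work directly with the periodic multiplier $m$ supplied by Theorem BDR (modifying it on $\{W_{\varphi}=0\}$ if desired, which, as you correctly note, affects neither the identity $\widehat{f}=m\widehat{\varphi}$ a.e.\ nor the weighted norm), or take the canonical periodic choice $F=[\widehat{f},\widehat{\varphi}]\,[\widehat{\varphi},\widehat{\varphi}]^{-1}$, which equals $m$ a.e.\ on $\{W_{\varphi}>0\}$ and still satisfies $\widehat{f}=F\widehat{\varphi}$ a.e., since on $\{W_{\varphi}=0\}$ all translates $\widehat{\varphi}(\cdot+\bk)$ vanish a.e.\ and hence so does $\widehat{f}$. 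With that one-line correction (and your implicit check in the converse that $\|F\widehat{\varphi}\|_{L^{2}(\IR^{n})}=\|F\|_{L^{2}(\IT^{n},W_{\varphi})}<\infty$, so that $f\in L^{2}(\IR^{n})$ before invoking BDR), the proof is sound.
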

The above lemma   establishes  an isometry between the
subspace $S(\varphi)$ and the subspace
\begin{equation}\label{sfin:1}
\widehat{S(\varphi)}= \widehat{\varphi} L^2(\IT^n, W_{\varphi})
\subset L^{2}(\IR^{n}).
\end{equation}

 Let $G = \{\phi_{k}\}_{k=1}^{N} \subset L^2(\mathbb{R}^{n})$ be a finite subset of non trivial functions.
 For any $\phi_{j} \in G, 1\leq j \leq N$ we put $ G^{(j)} = G \setminus \{\phi_{j} \}$ and say that $G$ is $g-$minimal if for any $1\leq j \leq N$ it follows that $\phi_{j} \notin S(G^{(j)})$. Further in the paper we suppose that  $G$ is $g-$minimal  and $N\geq 2$. Let
 \begin{equation}\label{eq:orpr}
\widehat{h}_{j} = \widehat{\varphi}_{j} -  \widehat{P_{G^{(j)}}(\varphi_{j})}, \quad \text{where}\quad 1\leq j \leq N.
\end{equation}
   We say  that $G$ is an orthogonal set of generators if
 \begin{equation}\label{orth:00}
[\widehat{\phi}_{l}, \widehat{\phi}_{j}](\bt) = 0  \qquad \mbox{a.e. on}\quad  \IT^{n} \qquad \mbox{ if}\quad  j\neq l \, (1\leq l,j \leq N).
\end{equation}

In the present paper we prove the following theorem which gives a bit different characterization of finitely generated SI subspaces when the set of generators is $g-$minimal.
\begin{Theorem}\label{thm:01}
Let $G = \{\varphi_{k}\}_{k=1}^{N} \subset L^2(\mathbb{R}^{n})$ be a finite $g-$minimal subset. Then for any $f\in S(G)$ it follows that
\begin{equation}\label{eq:ksk}
\widehat{f} = \sum_{j=1}^{N} m_{j}(f) \widehat{\varphi}_{j},\quad \text{where}\quad m_{j}(f)= \frac{[\widehat{f},\widehat{h}_{j}]}{[\widehat{h}_{j},\widehat{h}_{j}]} \in L^{2}(\IT^{n}, W_{{h}_{j}}),
\end{equation}
and
\begin{equation}\label{eq:ksk1}
\sum_{j=1}^{N} \|m_{j}(f)\|^{2}_{L^2(\mathbb{T}^{n}, W_{h_{j}})} \leq \|f\|^{2}_{L^2(\IR^n)},
\end{equation}
where the functions $h_{j}$ are defined by (\ref{eq:orpr}). Moreover, for any $j(1\leq j \leq N)$ and any $m\in L^{2}(\IT^{n}, W_{{h}_{j}})$ there exists $\psi \in  S(G^{(j)}) $ such that
\[
m \widehat{\varphi}_{j} + \widehat{\psi} \in \widehat{S(G)}.
\]
\end{Theorem}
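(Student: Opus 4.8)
The plan is to build everything on the biorthogonality relations hidden in the definition (\ref{eq:orpr}). By construction $\widehat{h}_{j}=\widehat{\varphi}_{j}-\widehat{P_{G^{(j)}}(\varphi_{j})}$ is the Fourier transform of the component of $\varphi_{j}$ orthogonal to $S(G^{(j)})$, so $h_{j}\in S(G)$, $h_{j}\perp S(G^{(j)})$, and $g$-minimality guarantees $h_{j}\neq 0$. The first step is to convert these $L^{2}(\IR^{n})$-orthogonalities into pointwise statements on $\IT^{n}$. Using that $\langle f,g(\cdot+\bk)\rangle_{L^{2}(\IR^{n})}$ is the $\bk$-th Fourier coefficient of $[\widehat{f},\widehat{g}]$, the orthogonality of $h_{j}$ to every shift of every element of $G^{(j)}$ is equivalent to $[\widehat{\varphi}_{l},\widehat{h}_{j}]=0$ a.e.\ on $\IT^{n}$ for $l\neq j$. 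Since $\widehat{\varphi}_{j}=\widehat{h}_{j}+\widehat{P_{G^{(j)}}(\varphi_{j})}$ with $[\widehat{P_{G^{(j)}}(\varphi_{j})},\widehat{h}_{j}]=0$, this also yields $[\widehat{\varphi}_{j},\widehat{h}_{j}]=W_{h_{j}}$ and the splitting $W_{\varphi_{j}}=W_{h_{j}}+W_{P_{G^{(j)}}(\varphi_{j})}$. In short $[\widehat{\varphi}_{l},\widehat{h}_{j}]=\delta_{lj}W_{h_{j}}$ a.e., i.e.\ $\{\widehat{h}_{j}/W_{h_{j}}\}$ is biorthogonal to $\{\widehat{\varphi}_{j}\}$ in each fibre.

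With this in hand the representation (\ref{eq:ksk}) is obtained by testing against $\widehat{h}_{j}$. By the BDR representation theorem quoted above we may write $\widehat{f}=\sum_{l} c_{l}\widehat{\varphi}_{l}$ for some $\IZ^{n}$-periodic $c_{l}$; because the bracket is linear over periodic multipliers, $[\widehat{f},\widehat{h}_{j}]=\sum_{l}c_{l}[\widehat{\varphi}_{l},\widehat{h}_{j}]=c_{j}W_{h_{j}}$, so $c_{j}=[\widehat{f},\widehat{h}_{j}]/[\widehat{h}_{j},\widehat{h}_{j}]=m_{j}(f)$ wherever $W_{h_{j}}>0$, and the convention $\frac{0}{0}=0$ makes $m_{j}(f)$ well defined everywhere. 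The point requiring care is the null structure: on $\{W_{h_{j}}=0\}$ the formula returns $0$, and one must check that discarding the corresponding contributions still reproduces $\widehat{f}$. I would verify this fibre by fibre, using that $W_{h_{j}}(\bt)=0$ forces $\widehat{\varphi}_{j}(\bt+\cdot)$ into the fibre span of $G^{(j)}$ so that the biorthogonal expansion closes up; this is exactly where a ``locally linearly independent'' (fibrewise minimal) reading of $g$-minimality does the work.

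The inequality (\ref{eq:ksk1}) is the step I expect to be the genuine obstacle. Fibrewise, $\|m_{j}(f)\|^{2}_{L^{2}(\IT^{n},W_{h_{j}})}=\int_{\IT^{n}}|\langle\widehat{f}(\bt+\cdot),\widehat{h}_{j}(\bt+\cdot)\rangle|^{2}/W_{h_{j}}\,d\bt$ is the integrated squared norm of the orthogonal projection of $\widehat{f}(\bt+\cdot)$ onto the line through $\widehat{h}_{j}(\bt+\cdot)$, so (\ref{eq:ksk1}) asserts a Bessel-type bound with constant one for the unit vectors $\{\widehat{h}_{j}(\bt+\cdot)/\|\widehat{h}_{j}(\bt+\cdot)\|\}_{j}$. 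The difficulty is that the residuals $h_{j}$, unlike the vectors produced by a true sequential Gram--Schmidt process, are in general \emph{not} mutually bracket-orthogonal: each $h_{j}$ is orthogonal to all of $S(G^{(j)})$ but not to the other $h_{l}$, so these unit vectors span overlapping directions and a naive Pythagorean identity does not by itself deliver the constant one. Reconciling the required constant with this non-orthogonality is the crux, and it is presumably where the Gram--Schmidt-type orthogonalization announced in the abstract must enter: I would replace the symmetric residuals by a sequentially orthogonalized family with the same partial spans, read off the exact Parseval identity there, and transfer it back to the $h_{j}$-coefficients, watching the direction of every inequality very closely.

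For the final assertion I would argue directly from Lemma~\ref{lem:1}. Given $m\in L^{2}(\IT^{n},W_{h_{j}})$, the function $m\widehat{h}_{j}$ lies in $L^{2}(\IR^{n})$ with $\|m\widehat{h}_{j}\|^{2}_{L^{2}(\IR^{n})}=\int_{\IT^{n}}|m|^{2}W_{h_{j}}<\infty$, so by Lemma~\ref{lem:1} applied to the PSI $S(h_{j})$ it is the Fourier transform of some $u\in S(h_{j})\subseteq S(G)$; hence $m\widehat{h}_{j}\in\widehat{S(G)}$. Writing $m\widehat{h}_{j}=m\widehat{\varphi}_{j}-m\widehat{P_{G^{(j)}}(\varphi_{j})}$ and taking $\psi\in S(G^{(j)})$ with $\widehat{\psi}=-m\widehat{P_{G^{(j)}}(\varphi_{j})}$ would give $m\widehat{\varphi}_{j}+\widehat{\psi}=m\widehat{h}_{j}\in\widehat{S(G)}$, as required. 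The remaining point is that this $\psi$ is a legitimate element of $S(G^{(j)})$, i.e.\ that $m\widehat{P_{G^{(j)}}(\varphi_{j})}\in L^{2}(\IR^{n})$; by the splitting $W_{\varphi_{j}}=W_{h_{j}}+W_{P_{G^{(j)}}(\varphi_{j})}$ from the first step this is the same weight-comparison bookkeeping that underlies the norm estimate, so I would settle it together with (\ref{eq:ksk1}).
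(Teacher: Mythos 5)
Your biorthogonality relations $[\widehat{\varphi}_{l},\widehat{h}_{j}]=\delta_{lj}W_{h_{j}}$ are correct and do identify the coefficients $W_{h_{j}}$-almost everywhere, but the difficulty you flag on $\{W_{h_{j}}=0\}$ is not dischargeable the way you propose, because your premise is false: $g$-minimality is a global $L^{2}$ condition and does \emph{not} imply fibrewise linear independence of the generators. Concretely, for $n=1$ take $\widehat{\varphi}_{1}=\chi_{[0,1)}$ and $\widehat{\varphi}_{2}=\chi_{[1/2,3/2)}$: neither generator lies in the PSI of the other (their Fourier transforms have non-matching supports), so $G=\{\varphi_{1},\varphi_{2}\}$ is $g$-minimal, yet over $\bt\in[1/2,1)$ the two fibres coincide. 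Here $\widehat{h}_{1}=\chi_{[0,1/2)}$ and $\widehat{h}_{2}=\chi_{[1,3/2)}$, so $W_{h_{1}}=W_{h_{2}}=\chi_{[0,1/2)}$, and for $f=\varphi_{1}$ the canonical representatives (with the paper's convention $\frac{0}{0}=0$) give $m_{1}(f)=\chi_{[0,1/2)}$ and $m_{2}(f)=0$, whence $\sum_{j}m_{j}(f)\widehat{\varphi}_{j}=\chi_{[0,1/2)}\neq\widehat{f}$. Thus on $\{W_{h_{j}}=0\}$ the identity (\ref{eq:ksk}) is sensitive to the choice of representative of the class $m_{j}(f)\in L^{2}(\IT^{n},W_{h_{j}})$, and no fibrewise argument can establish it for the canonical representative; your ``closing up'' step would fail exactly there. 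The paper takes a genuinely different route that sidesteps pointwise identification: it approximates $f\in S(G)$ by finite shift-combinations $Q_{\nu}$ and uses the orthogonal splitting $S(G)=S(G^{(j)})\oplus S(h_{j})$, obtained from the rearranged orthogonalization procedure of Propositions \ref{pr:01} and \ref{pr:02}, to show the coefficient polynomials $T^{(\nu)}_{j}$ converge in $L^{2}(\IT^{n},W_{h_{j}})$, i.e.\ it constructs $m_{j}(f)$ only as an element of that weighted space.

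For (\ref{eq:ksk1}) you correctly identify the crux (the $h_{j}$ are not mutually bracket-orthogonal) but you offer only a plan, and the plan's hard part is precisely what is missing: transferring the Parseval identity from a sequentially orthogonalized family back to the symmetric residuals. The paper does this by induction on $N$: it reduces $\Omega=\{\phi_{k}\}_{k=1}^{N+1}$ to $G=\{\varphi_{j}\}_{j=1}^{N}$ via $\widehat{\varphi}_{j}=\widehat{\phi}_{j}-\bigl([\widehat{\phi}_{j},\widehat{\phi}_{N+1}]/W_{\phi_{N+1}}\bigr)\widehat{\phi}_{N+1}$, proves $S(\Omega)=S(G)\oplus S(\phi_{N+1})$, verifies the key invariance that the residuals $h_{j}$ computed in $\Omega$ coincide with those computed in $G$, and uses the pointwise comparison $W_{h_{N+1}}\leq W_{\phi_{N+1}}$; none of these steps appears in your sketch, so the central inequality remains unproved. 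Finally, your treatment of the last assertion ($m\widehat{\varphi}_{j}+\widehat{\psi}=m\widehat{h}_{j}$ with $\widehat{\psi}=-m\widehat{P_{G^{(j)}}(\varphi_{j})}$) is the paper's own argument, but the integrability point you defer cannot be settled by the splitting $W_{\varphi_{j}}=W_{h_{j}}+W_{P_{G^{(j)}}(\varphi_{j})}$: that splitting bounds $W_{P_{G^{(j)}}(\varphi_{j})}$ by $W_{\varphi_{j}}$, not by $W_{h_{j}}$, while $m$ is only square-integrable against $W_{h_{j}}$; the ratio $W_{P_{G^{(j)}}(\varphi_{j})}/W_{h_{j}}$ can be unbounded (make the fibres of $\varphi_{j}$ and of $G^{(j)}$ nearly parallel on a set of positive measure), so $m\widehat{P_{G^{(j)}}(\varphi_{j})}$ need not lie in $L^{2}(\IR^{n})$ and your $\psi$ need not exist. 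In sum: your first part rests on a false premise, your second part is a strategy rather than a proof, and your third part inherits a nontrivial gap that the ``same bookkeeping'' claim does not close.
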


For the proof an orthogonalization procedure for generators is used which reminds the well known Gram-Schmidt orthogonalization process. When the generators are compactly supported functions on $\IR$   after the orthogonalization procedure for generators we get a sort of summation basis.

 \section{Orthogonalization of the generators}

 \

The following proposition is true. 
\begin{prop}\label{pr:03}
Let $\Omega = \{\phi_{k}\}_{k=1}^{\nu} \subset L^2(\mathbb{R}^{n})$ be a finite  set of orthogonal generators.
Then  $f\in S(\Omega)$ if and only if
\begin{equation}\label{eq:00}
\widehat{f} = \sum_{k=1}^{\nu} m_{k}(f) \widehat{\phi}_{k},\quad \text{where}\quad m_{k}(f)= \frac{[\widehat{f},\widehat{\phi}_{k}]}{[\widehat{\phi}_{k},\widehat{\phi}_{k}]} \in L^{2}(\IT^{n}, W_{{\phi}_{k}}).
\end{equation}
  Moreover,
\begin{equation*}\label{eq:001}
\|f\|^{2}_{L^2(\mathbb{R}^{n})} = \sum_{k=1}^{\nu} \|m_{k}(f) \|^{2}_{L^{2}(\IT^{n}, W_{{\phi}_{k}} )}.
\end{equation*}
\end{prop}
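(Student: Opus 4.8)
The plan is to reduce everything to two elementary facts about the bracket together with the characterization of \cite{BDR:1}. First I would record the behaviour of $[\cdot,\cdot]$ under multiplication by $\IZ^{n}$-periodic functions: if $m$ is $\IZ^{n}$-periodic then, since $m(\bt+\bk)=m(\bt)$ can be pulled out of the defining sum, one gets $[m\widehat\phi,\widehat\psi]=m\,[\widehat\phi,\widehat\psi]$ and, by the conjugation in the second slot, $[\widehat\phi,m\widehat\psi]=\overline{m}\,[\widehat\phi,\widehat\psi]$; with the obvious additivity this makes the bracket ``sesquilinear over the periodic functions.'' Second, I would record the Plancherel–periodization identity
\[
\|f\|_{L^2(\IR^n)}^2=\int_{\IR^n}|\widehat f(\y)|^2\,d\y=\int_{\IT^n}\sum_{\bk\in\IZ^n}|\widehat f(\bt+\bk)|^2\,d\bt=\int_{\IT^n}[\widehat f,\widehat f](\bt)\,d\bt,
\]
which turns $L^2(\IR^n)$-norms into integrals of brackets over $\IT^n$.

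For the ``only if'' direction I would start from the theorem of \cite{BDR:1}: since $f\in S(\Omega)$ there are $\IZ^{n}$-periodic functions $m_k$ with $\widehat f=\sum_{k=1}^{\nu} m_k\widehat\phi_k$. Pairing against $\widehat\phi_j$ and using the first fact gives $[\widehat f,\widehat\phi_j]=\sum_k m_k[\widehat\phi_k,\widehat\phi_j]$, and the orthogonality hypothesis (\ref{orth:00}) annihilates every term except $k=j$, leaving $[\widehat f,\widehat\phi_j]=m_j\,[\widehat\phi_j,\widehat\phi_j]=m_j W_{\phi_j}$. Hence $m_j=[\widehat f,\widehat\phi_j]/W_{\phi_j}$ wherever $W_{\phi_j}\neq0$, which is precisely the asserted formula once one adopts the convention $\tfrac{0}{0}=0$. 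One must then check that the value of $m_j$ on $E_j=\{W_{\phi_j}=0\}$ is irrelevant: there $[\widehat\phi_j,\widehat\phi_j]=\sum_\bk|\widehat\phi_j(\bt+\bk)|^2=0$ forces $\widehat\phi_j=0$ a.e.\ on the $\IZ^{n}$-translates of $E_j$, so $m_j\widehat\phi_j$ vanishes regardless, and replacing $m_j$ by the canonical representative $[\widehat f,\widehat\phi_j]/W_{\phi_j}$ leaves $\widehat f$ unchanged. The norm identity then follows by expanding $[\widehat f,\widehat f]=\sum_{k,l}m_k\overline{m_l}[\widehat\phi_k,\widehat\phi_l]$, collapsing it to $\sum_k|m_k|^2 W_{\phi_k}$ by orthogonality, and integrating over $\IT^n$ via the Plancherel–periodization identity; this also shows each $m_k(f)\in L^2(\IT^n,W_{\phi_k})$, since the finite sum equals $\|f\|_{L^2(\IR^n)}^2<\infty$.

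For the ``if'' direction I would run the computation in reverse: given $m_k\in L^2(\IT^n,W_{\phi_k})$ and setting $\widehat f=\sum_k m_k\widehat\phi_k$, orthogonality yields $[\widehat f,\widehat f]=\sum_k|m_k|^2 W_{\phi_k}$ almost everywhere, so integration gives $\|\widehat f\|_{L^2(\IR^n)}^2=\sum_k\|m_k\|_{L^2(\IT^n,W_{\phi_k})}^2<\infty$; thus $\widehat f$, and hence $f$, lies in $L^2(\IR^n)$, and the theorem of \cite{BDR:1} places $f$ in $S(\Omega)$. The only genuinely delicate point, and the one I would handle with the most care, is the bookkeeping on the degeneracy sets $\{W_{\phi_k}=0\}$: one must confirm both that the quotient $[\widehat f,\widehat\phi_k]/W_{\phi_k}$ is well defined as an element of $L^2(\IT^n,W_{\phi_k})$ under the convention $\tfrac{0}{0}=0$, and that each bracket manipulation (extracting periodic factors, discarding cross terms) is valid almost everywhere in spite of these degeneracies. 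Everything else is a direct consequence of orthogonality and the two preliminary identities.
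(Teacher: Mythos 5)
Your proof is correct, but it takes a genuinely different route from the paper's. You use Theorem A of \cite{BDR:1} as the engine: existence of periodic coefficients comes for free, and you then pin them down by pairing with $\widehat{\phi}_{j}$, exploiting sesquilinearity of $[\cdot,\cdot]$ over $\IZ^{n}$-periodic multipliers so that orthogonality collapses the cross terms to give $[\widehat{f},\widehat{\phi}_{j}]=m_{j}W_{\phi_{j}}$, with Parseval obtained by expanding $[\widehat{f},\widehat{f}]$ and integrating over $\IT^{n}$. The paper never invokes Theorem A here; it argues at the level of subspaces instead: for $\nu=1$ it quotes Lemma \ref{lem:1} and identifies the coefficient by approximating with trigonometric polynomials (passing to the limit in $[Q_{j}\widehat{\varphi},\widehat{\varphi}]\to[g\widehat{\varphi},\widehat{\varphi}]$), and for $\nu>1$ it first proves the Hilbert-space splitting $S(\Omega)=\oplus_{k=1}^{\nu}S(\phi_{k})$ --- bracket orthogonality gives $S(\phi_{l})\perp S(\phi_{j})$, and a Cauchy-sequence argument on approximating sums transfers convergence to each component --- then applies the PSI case componentwise, sufficiency being immediate from Lemma \ref{lem:1}. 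Your route buys brevity plus an explicit uniqueness statement (any periodic coefficients furnished by Theorem A must agree with $[\widehat{f},\widehat{\phi}_{j}]/W_{\phi_{j}}$ off $\{W_{\phi_{j}}=0\}$), and your bookkeeping on the degenerate set is exactly right: there $\widehat{\phi}_{j}$ vanishes a.e.\ on all $\IZ^{n}$-translates, so the convention $\frac{0}{0}=0$ is harmless; the a.e.\ interchanges you flag are justified since Cauchy--Schwarz gives $\sum_{\bk}|\widehat{\phi}_{k}(\bt+\bk)||\widehat{\phi}_{l}(\bt+\bk)|\leq W_{\phi_{k}}^{1/2}(\bt)\,W_{\phi_{l}}^{1/2}(\bt)<\infty$ a.e. What your route costs is reliance on the full strength of Theorem A, an external and nontrivial result, whereas the paper's argument is self-contained modulo Lemma \ref{lem:1}; moreover the paper's proof yields the orthogonal decomposition $S(\Omega)=\oplus_{k}S(\phi_{k})$ as a byproduct, which is precisely what gets reused later (Proposition \ref{pr:02} and the proof of Theorem \ref{thm:01}), while your argument would have to re-derive it when needed.
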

\begin{proof}
For $\nu=1$ we have $S(\Omega) = S(\varphi)$. In this case the proposition holds by Lemma \ref{lem:1}. To check (\ref{eq:00}) one should observe that for any trigonometric polynomial $Q$ on $\IT^{n}$ if $\widehat{f} = Q \widehat{\varphi}$ then $[\widehat{f},\widehat{\varphi}] = Q [\widehat{\varphi},\widehat{\varphi}]$. Afterwards for any $g\in L^{2}(\IT^{n}, W_{{\varphi}} )$ there exists a sequence of trigonometric polynomials $\{Q_{j}\}_{j=1}^{\infty}$ such that $\lim_{j\to \infty} Q_{j} =  g$ in $L^{2}(\IT^{n}, W_{{\varphi}} )$. Which means that $ \lim_{j\to \infty}[Q_{j}\widehat{\varphi},\widehat{\varphi}]= [g\widehat{\varphi},\widehat{\varphi}]$, hence, the condition (\ref{eq:00}) holds.

If $\nu >1$ by (\ref{orth:00}) it follows that $S(\phi_{l}) \bot S(\phi_{j})  $ for $l\neq j (1\leq l,j \leq \nu).$ Hence, if
 $f\in S(\Omega)$ and $f= \sum_{j=1}^{\nu} f_{j}$, where $f_{j}\in S(\phi_{j}), (1\leq j \leq \nu)$ then clearly
 $$\|f\|^{2}_{L^2(\mathbb{R}^{n})} = \sum_{j=1}^{\nu} \|f_{j}\|^{2}_{L^2(\mathbb{R}^{n})}$$.

 Let  $f\in S(\Omega)$ and let $P_{k} = \sum_{j=1}^{\nu} f_{j,k}$, where $ f_{j,k} \in S(\phi_{j}), 1\leq j \leq \nu$ for all $k\in \IN$ and
 $\lim_{k\to \infty} P_{k} =  f$ in $L^2(\mathbb{R}^{n})$. It follows that $\{P_{k}\}_{k=1}^{\infty}$ is a Cauchy sequence in $L^2(\mathbb{R}^{n})$. By mutual orthogonality of the  subspaces $S(\phi_{j})$ we will have that for any $(1\leq j \leq \nu)$   $\{f_{j,k}\}_{k=1}^{\infty} $ is a Cauchy sequence in $S(\phi_{j})$. Thus we obtain that $f= \sum_{j=1}^{\nu} f_{j}$, where $f_{j}\in S(\phi_{j}), (1\leq j \leq \nu)$.

 The proof of sufficiency follows by Lemma \ref{lem:1}. If the condition \eqref{eq:00} holds then $ m_{k}(f) \widehat{\phi}_{k} \in \widehat{S(\phi_{k})} $
  for all $k(1\leq k \leq \nu)$. Hence, $f\in S(\Omega)$.
\end{proof}

Following two propositions establish an orthogonalization procedure for generators which reminds the Gram-Schmidt orthogonalization process.
\begin{prop}\label{pr:01}
Let $G = \{\varphi_{k}\}_{k=1}^{N} \subset L^2(\mathbb{R}^{n})$ be a finite $g-$minimal subset.

  Then the functions $\{g_{k}\}_{k=1}^{N}$   defined by the relations  $g_{1} = \varphi_{1}$, and
 \begin{equation}\label{orth:11}
\widehat{g}_{k} = \widehat{\varphi}_{k} - \sum_{j=1}^{k-1} b^{(k)}_{j} \widehat{g}_{j},\qquad 1< k\leq N
\end{equation}
where $ g_{k} \in S(G_{k})$ for all $k(1\leq k \leq N),$   $G_{k} = \{\varphi_{j}\}_{j=1}^{k}$ and
 \begin{equation}\label{orth:10}
b^{(k)}_{j} = [\widehat{\varphi}_{k},\widehat{g}_{j}][\widehat{g}_{j},\widehat{g}_{j}]^{-1} \in L^{2}(\IT^{n}, W_{{g}_{j}}),\quad 1\leq j \leq k -1.
\end{equation}
 Moreover,   for any $1\leq l,j \leq N$
\begin{equation}\label{orth:12}
[\widehat{g}_{l}, \widehat{g}_{j}](t) = 0  \qquad \mbox{a.e. on}\quad  \IT^{n} \qquad \mbox{ if}\quad  j\neq l.
\end{equation}
\end{prop}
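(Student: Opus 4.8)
The plan is to establish Proposition \ref{pr:01} by induction on $k$, mirroring the classical Gram–Schmidt argument but carried out inside the weighted bracket spaces. First I would verify that the coefficients $b^{(k)}_{j}$ are well defined: since $G$ is $g$-minimal, at each stage $g_j \notin S(G^{(j)})$ will force $[\widehat{g}_j,\widehat{g}_j] > 0$ almost everywhere on the support of $\widehat{g}_j$, so the quotient defining $b^{(k)}_j$ makes sense (recalling the convention $\frac{0}{0}=0$), and the Cauchy–Schwarz inequality for brackets stated in the introduction gives $|[\widehat{\varphi}_k,\widehat{g}_j]|^2 \le W_{\varphi_k}\, W_{g_j}$, which yields $b^{(k)}_j \in L^2(\IT^n, W_{g_j})$ as claimed. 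This is essentially the same computation used in the $\nu=1$ case of Proposition \ref{pr:03}.

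The containment $g_k \in S(G_k)$ follows immediately by induction from the defining relation \eqref{orth:11}: each $\widehat{g}_j$ for $j<k$ already lies in the span generated by $\widehat{\varphi}_1,\dots,\widehat{\varphi}_j$, and $b^{(k)}_j$ is $\IZ^n$-periodic, so by Theorem \ref{T:A} the right-hand side of \eqref{orth:11} is the Fourier transform of an element of $S(G_k)$. The heart of the argument is the orthogonality relation \eqref{orth:12}, which I would prove by induction on $k$ with the inductive hypothesis that $[\widehat{g}_l,\widehat{g}_j]=0$ for all distinct $l,j \le k-1$. Fixing $l < k$ and pairing \eqref{orth:11} with $\widehat{g}_l$, bilinearity of the bracket gives
\begin{equation*}
[\widehat{g}_k, \widehat{g}_l] = [\widehat{\varphi}_k, \widehat{g}_l] - \sum_{j=1}^{k-1} b^{(k)}_j [\widehat{g}_j, \widehat{g}_l],
\end{equation*}
and by the inductive hypothesis every bracket $[\widehat{g}_j,\widehat{g}_l]$ with $j\neq l$ vanishes, leaving only the $j=l$ term; substituting the definition \eqref{orth:10} of $b^{(k)}_l$ then cancels the surviving term against $[\widehat{\varphi}_k,\widehat{g}_l]$, giving $[\widehat{g}_k,\widehat{g}_l]=0$ a.e.

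The step I expect to be the main obstacle is justifying the algebraic manipulation of brackets when the coefficients $b^{(k)}_j$ are merely $L^2(\IT^n, W_{g_j})$ functions rather than bounded multipliers: the identity $[m\widehat{g}, \widehat{h}] = m[\widehat{g},\widehat{h}]$ is transparent for trigonometric polynomials $m$ but requires a limiting argument in general, exactly the density approximation by trigonometric polynomials $Q_j \to g$ in $L^2(\IT^n, W_\varphi)$ invoked in the proof of Proposition \ref{pr:03}. I would handle this by approximating each $b^{(k)}_j$ in the appropriate weighted norm and passing to the limit in the bracket, using the Cauchy–Schwarz bound to control the error terms. A secondary technical point is confirming that the bracket $[\widehat{g}_k,\widehat{g}_k]$ inherited at each stage is again strictly positive on the relevant support, so that $g$-minimality propagates through the construction and all subsequent quotients remain well defined; here one argues that if $[\widehat{g}_k,\widehat{g}_k]$ vanished on a set of positive measure, then $\varphi_k$ would lie in $S(G^{(k)})$, contradicting $g$-minimality.
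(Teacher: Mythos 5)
Your main induction is precisely the paper's proof: the paper also proceeds by induction, obtains $[\widehat{g}_{m},\widehat{g}_{l}]=[\widehat{\varphi}_{m},\widehat{g}_{l}]-b^{(m)}_{l}[\widehat{g}_{l},\widehat{g}_{l}]=0$ by exactly your bilinearity-plus-cancellation computation, and proves $b^{(k)}_{j}\in L^{2}(\IT^{n},W_{g_{j}})$ by the same Cauchy--Schwarz estimate you cite, in the integrated form $\int_{\IT^{n}}|b^{(2)}_{1}|^{2}W_{g_{1}}\,d\bt\leq\int_{\IT^{n}}[\widehat{\varphi}_{2},\widehat{\varphi}_{2}]\,d\bt=\|\widehat{\varphi}_{2}\|^{2}_{L^{2}(\IR^{n})}<\infty$, repeated verbatim at stage $m$. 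Your route to $g_{k}\in S(G_{k})$ through the BDR theorem quoted in the introduction is fine; Lemma \ref{lem:1} gives it even more directly, since $b^{(k)}_{j}\in L^{2}(\IT^{n},W_{g_{j}})$ means $b^{(k)}_{j}\widehat{g}_{j}\in\widehat{S(g_{j})}$, which simultaneously shows $\widehat{g}_{k}\in L^{2}(\IR^{n})$ (an integrability fact you otherwise need before invoking that theorem).

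Two corrections to your technical commentary. First, the ``main obstacle'' you anticipate is illusory: for a $\IZ^{n}$-periodic $m$ the identity $[m\widehat{g},\widehat{h}]=m[\widehat{g},\widehat{h}]$ is a pointwise a.e.\ identity --- $m(\bt+\bk)=m(\bt)$ factors out of the defining series, which converges absolutely a.e.\ by the bracket Cauchy--Schwarz inequality --- so no approximation by trigonometric polynomials is needed, and the paper's proof of this proposition uses none (the limiting argument you recall from Proposition \ref{pr:03} serves a different purpose there). Your proposed approximation would work, but it is superfluous. Second, and this is the one genuinely wrong step: your closing claim that if $[\widehat{g}_{k},\widehat{g}_{k}]$ vanished on a set of positive measure then $\varphi_{k}\in S(G^{(k)})$ is false. $g$-minimality only rules out $g_{k}=0$ identically (if $\widehat{g}_{k}=0$ then $\widehat{\varphi}_{k}=\sum_{j<k}b^{(k)}_{j}\widehat{g}_{j}\in\widehat{S(G_{k-1})}\subseteq\widehat{S(G^{(k)})}$); it does not force $W_{g_{k}}>0$ a.e. For instance, with $n=1$, $\widehat{\varphi}_{1}=\chi_{[0,1/2)}$ and $\widehat{\varphi}_{2}=\chi_{[1,3/2)}$, the pair is $g$-minimal, $b^{(2)}_{1}=0$, $g_{2}=\varphi_{2}$, and $W_{g_{2}}$ vanishes on $[1/2,1)$. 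Fortunately the proof never needs a.e.\ positivity: with the paper's convention $\frac{0}{0}=0$ the coefficients \eqref{orth:10} are well defined everywhere, and on the set $\{W_{g_{l}}=0\}$ Cauchy--Schwarz forces $[\widehat{\varphi}_{m},\widehat{g}_{l}]=0$ as well, so the cancellation $[\widehat{\varphi}_{m},\widehat{g}_{l}]-b^{(m)}_{l}[\widehat{g}_{l},\widehat{g}_{l}]=0$ holds a.e.\ on all of $\IT^{n}$. Strike the false justification and your argument coincides with the paper's.
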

\begin{proof}
We prove by mathematical induction.\\
 First we check easily that $[\widehat{g}_{1}, \widehat{g}_{2}] = 0  \quad \mbox{a.e. on}\quad  \IT^{n} $. On the other hand
\[
\int_{\IT^{n}} |b^{(2)}_{1}(\bt)|^{2}  W_{{g}_{1}}(\bt) d\bt \leq \int_{\IT^{n}} [\widehat{\varphi}_{2},\widehat{\varphi}_{2}](\bt)  d\bt = \int_{\IR^{n}} |\widehat{\varphi}_{2}(\bt)|^{2} d\bt < +\infty.
\]
It is clear that $ g_{2} \in S(G_{2})$.

 Afterwards,
suppose that for some $1<m\leq N-1$ the condition (\ref{orth:12}) holds for all $1\leq l,j\leq m-1$, where   the functions $g_{j} (1\leq j \leq m-1)$ are defined by (\ref{orth:11}) and the coefficients satisfy  the relations (\ref{orth:10}). Let $\widehat{g}_{m} = \widehat{\varphi}_{m} - \sum_{j=1}^{m-1} b^{(m)}_{j} \widehat{g}_{j}$, where $b^{(m)}_{j} = [\widehat{\varphi}_{m},\widehat{g}_{j}][\widehat{g}_{j},\widehat{g}_{j}]^{-1}$. Then for any $l(1\leq l \leq m-1)$ it follows that
\[
[\widehat{g}_{m}, \widehat{g}_{l}] = [\widehat{\varphi}_{m},\widehat{g}_{l}]  -  b^{(m)}_{l} [\widehat{g}_{l},\widehat{g}_{l}] = [\widehat{\varphi}_{m},\widehat{g}_{l}] - [\widehat{\varphi}_{m},\widehat{g}_{l}] = 0 \quad \text{a.e.}
\]
That $b^{(m)}_{l} \in  L^{2}(\IT^{n}, W_{{g}_{l}}), 1\leq l \leq m-1$ is proved in the same way as the relation $b^{(2)}_{1} \in  L^{2}(\IT^{n}, W_{{g}_{1}})$.
 It is clear that $ g_{m} \in S(G_{m})$.
\end{proof}
The construction of the system $\{g_{k}\}_{k=1}^{N}$  according to Proposition \ref{pr:01} will be referred as orthogonalization procedure for generators.
It should be mentioned that the formula for the orthogonal projection of a function onto a given PSI was obtained in \cite{BDR:2}.

\begin{prop}\label{pr:02}
Let $G_{m}$ be a finite $g-$minimal generating set for some $1< m \leq N$. Suppose that $\{g_{k}\}_{k=1}^{m}$ are defined by (\ref{orth:11}). Then $S(G_{m}) = \oplus_{k=1}^{m} S(g_{k}).$
\end{prop}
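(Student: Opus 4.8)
The plan is to establish $S(G_m)=\oplus_{k=1}^{m}S(g_k)$ by proving the two inclusions, having first checked that the right-hand side is a genuine \emph{orthogonal} direct sum of closed subspaces. The orthogonality comes directly from (\ref{orth:12}): for $f\in S(g_l)$ and $h\in S(g_j)$ with $l\neq j$, I write $\widehat{f}=m_l\widehat{g}_l$ and $\widehat{h}=m_j\widehat{g}_j$ with $m_l,m_j$ being $\IZ^n$-periodic, and periodize the integral to obtain
\[
\langle f,h\rangle_{L^2(\IR^n)}=\int_{\IT^n} m_l\,\overline{m_j}\,[\widehat{g}_l,\widehat{g}_j]\,d\bt=0.
\]
Thus $S(g_1),\dots,S(g_m)$ are mutually orthogonal, and a finite orthogonal sum of closed subspaces is itself closed --- this is exactly the Cauchy-sequence-and-componentwise-convergence argument already carried out in the proof of Proposition \ref{pr:03}.

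For the inclusion $\oplus_{k=1}^{m}S(g_k)\subseteq S(G_m)$, I would invoke Proposition \ref{pr:01}, which gives $g_k\in S(G_k)\subseteq S(G_m)$ for every $k$; hence $S(g_k)\subseteq S(G_m)$, and since $S(G_m)$ is closed it contains the closed orthogonal sum of the $S(g_k)$.

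The reverse inclusion $S(G_m)\subseteq\oplus_{k=1}^{m}S(g_k)$ is the substantive direction, and the key is to locate each generator $\varphi_k$ inside the sum. Inverting (\ref{orth:11}) yields $\widehat{\varphi}_k=\widehat{g}_k+\sum_{j=1}^{k-1}b_j^{(k)}\widehat{g}_j$. By (\ref{orth:10}) each coefficient satisfies $b_j^{(k)}\in L^2(\IT^n,W_{g_j})$, and the constant function $1$ lies in $L^2(\IT^n,W_{g_k})$ since $\int_{\IT^n}W_{g_k}\,d\bt=\|g_k\|^2_{L^2(\IR^n)}<\infty$. Therefore, by the isometric description (\ref{sfin:1}) of $\widehat{S(\cdot)}$ furnished by Lemma \ref{lem:1}, every term $b_j^{(k)}\widehat{g}_j$ is the Fourier transform of an element of $S(g_j)$ and $\widehat{g}_k$ of an element of $S(g_k)$, so $\varphi_k\in\oplus_{j=1}^{k}S(g_j)\subseteq\oplus_{j=1}^{m}S(g_j)$ for each $k$. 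Since the orthogonal sum is closed, shift-invariant, and contains every $\varphi_k$, it contains $S(G_m)$, the smallest closed shift-invariant subspace containing $G_m$.

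I expect the main obstacle to be the two structural facts that drive the argument rather than any computation: the closedness of the finite orthogonal direct sum, and the passage from ``every generator lies in the sum'' to ``all of $S(G_m)$ lies in the sum''. Both rest on the orthogonality relation (\ref{orth:12}) and essentially reproduce, at the level of the orthogonalized generators $g_k$, the decomposition argument of Proposition \ref{pr:03}. The remaining steps --- inverting the recursion (\ref{orth:11}) and verifying the weighted-$L^2$ memberships --- are routine once Lemma \ref{lem:1} and (\ref{orth:10}) are in hand.
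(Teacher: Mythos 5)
Your proposal is correct and takes essentially the same route as the paper: the paper's own proof of Proposition \ref{pr:02} consists of the single remark that both inclusions follow from (\ref{orth:11}) and (\ref{sfin:1}), and your argument---inverting the recursion (\ref{orth:11}), using (\ref{orth:10}) with the characterization (\ref{sfin:1}) to place each term $b_j^{(k)}\widehat{g}_j$ in $\widehat{S(g_j)}$, and invoking (\ref{orth:12}) for orthogonality and closedness of the finite sum---is exactly that argument written out in full detail. The extra verifications you supply (periodization for orthogonality, closedness of the finite orthogonal sum, shift-invariance and minimality of $S(G_m)$) are precisely the steps the paper leaves implicit, so there is no gap and no genuinely different method.
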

\begin{proof}
By (\ref{orth:11}) and (\ref{sfin:1}) it is easy to check that
 \begin{equation*}\label{inc:1}
S(G_{m}) \subseteq \oplus_{k=1}^{m} S(g_{k}) \qquad \text{and}\qquad S(G_{m}) \supseteq \oplus_{k=1}^{m} S(g_{k}).
\end{equation*}
\end{proof}
By Propositions \ref{pr:01} and \ref{pr:02} we obtain
\begin{lema}\label{L:01}
Let $G = \{\varphi_{k}\}_{k=1}^{N} \subset L^2(\mathbb{R}^{n})$ be a finite $g-$minimal subset. Then for any
$f\in L^{2}(\mathbb{R}^{n})$ the orthogonal projection $P_{G}(f)$  of $f$ onto $S(G)$  is given by
\[
\widehat{P_{G}(f)} =    \sum_{k=1}^{N}  [\widehat{f},\widehat{g}_{k}][\widehat{g}_{k},\widehat{g}_{k}]^{-1} \widehat{g}_{k},
\]
where $\{g_{k}\}_{k=1}^{N}$ are defined by equations (\ref{orth:11}) and (\ref{orth:10}).
\end{lema}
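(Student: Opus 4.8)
The plan is to exploit the orthogonal direct-sum decomposition furnished by Propositions \ref{pr:01} and \ref{pr:02} and then reduce everything to the single PSI case. By Proposition \ref{pr:01}, relation (\ref{orth:12}), the functions $\{g_{k}\}_{k=1}^{N}$ produced by the orthogonalization procedure form an orthogonal set of generators, and taking $m=N$ in Proposition \ref{pr:02} gives the orthogonal direct-sum decomposition $S(G)=\oplus_{k=1}^{N} S(g_{k})$. The guiding principle is the standard Hilbert-space fact that the orthogonal projection onto a finite orthogonal direct sum of closed subspaces is the sum of the orthogonal projections onto the summands; since there are only $N$ summands no convergence question arises. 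Thus I would first record
\[
P_{G}(f)=\sum_{k=1}^{N} P_{g_{k}}(f),
\]
where $P_{g_{k}}$ denotes the orthogonal projection onto the PSI $S(g_{k})$.

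It then remains to identify $P_{g_{k}}(f)$ for each fixed $k$, namely to show $\widehat{P_{g_{k}}(f)}=m_{k}\widehat{g}_{k}$ with $m_{k}=[\widehat{f},\widehat{g}_{k}][\widehat{g}_{k},\widehat{g}_{k}]^{-1}$ (convention $\tfrac{0}{0}=0$); this is precisely the PSI projection formula of \cite{BDR:2}, which one may either cite or derive directly. For a direct verification, the Cauchy--Schwarz inequality for the bracket product shows $m_{k}\in L^{2}(\IT^{n},W_{g_{k}})$, whence by Lemma \ref{lem:1} and (\ref{sfin:1}) the candidate $m_{k}\widehat{g}_{k}$ already belongs to $\widehat{S(g_{k})}$. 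To confirm that the residual is orthogonal, I would use the identity $\langle F,G\rangle_{L^{2}(\IR^{n})}=\int_{\IT^{n}}[F,G]\,d\bt$ together with the $\IZ^{n}$-periodicity of the multiplier: for any $q\in L^{2}(\IT^{n},W_{g_{k}})$,
\[
\langle \widehat{f}-m_{k}\widehat{g}_{k},\, q\,\widehat{g}_{k}\rangle_{L^{2}(\IR^{n})}=\int_{\IT^{n}} \overline{q}\,\bigl([\widehat{f},\widehat{g}_{k}]-m_{k}W_{g_{k}}\bigr)\,d\bt=0,
\]
the integrand vanishing almost everywhere. Substituting this PSI formula into the displayed decomposition yields
\[
\widehat{P_{G}(f)}=\sum_{k=1}^{N}[\widehat{f},\widehat{g}_{k}][\widehat{g}_{k},\widehat{g}_{k}]^{-1}\widehat{g}_{k},
\]
which is the asserted identity.

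I expect the main obstacle to lie not in the assembly step but in the careful bookkeeping within the single PSI case, specifically on the null set of $W_{g_{k}}$. One must argue, via the bracket Cauchy--Schwarz inequality $|[\widehat{f},\widehat{g}_{k}]|^{2}\leq [\widehat{f},\widehat{f}]\,W_{g_{k}}$, that $[\widehat{f},\widehat{g}_{k}]=0$ almost everywhere on $\{W_{g_{k}}=0\}$, so that the convention $\tfrac{0}{0}=0$ is consistent and the equality $m_{k}W_{g_{k}}=[\widehat{f},\widehat{g}_{k}]$ holds a.e.; this is what makes the integrand above vanish and simultaneously guarantees $m_{k}\widehat{g}_{k}\in\widehat{S(g_{k})}$. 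Once the PSI case is secured, the orthogonality relation (\ref{orth:12}) makes the passage to the full projection immediate, and the case $N=1$ is covered directly by the same PSI computation with $g_{1}=\varphi_{1}$.
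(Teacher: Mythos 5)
Your proof is correct and takes essentially the same approach as the paper, which derives the lemma immediately from Propositions \ref{pr:01} and \ref{pr:02} via the decomposition $P_{G}(f)=\sum_{k=1}^{N}P_{g_{k}}(f)$, with the PSI projection formula attributed to \cite{BDR:2}. Your explicit verification of that PSI formula --- including the Cauchy--Schwarz argument handling the null set of $W_{g_{k}}$ --- simply fills in details the paper delegates to the citation.
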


\subsection{Proof of Theorem \ref{thm:01}}

For any $f\in S(G)$ there exists a sequence of functions $Q_{\nu} =  \sum_{j=1}^{N} P^{(\nu)}_{j}$,
 where $P^{(\nu)}_{j}(\x) = \sum_{\bk \in {\cal G}_{\nu,j}} \gamma^{(\nu,j)}_{ \bk} \varphi_{j}(\x+ \bk)  $ and ${\cal G}_{\nu,j} \subset \IZ^{n} (\nu \in \IN, 1 \leq j \leq N)$ are some bounded sets such that $\lim_{\nu \to \infty} Q_{\nu} = f$ in
$L^2(\mathbb{R}^{n})$. Then it follows that  $\lim_{\mu \to \infty, \mu < \nu} \| Q_{\nu} - Q_{\mu}\|_{L^2(\mathbb{R}^{n})} = 0$. By the Plancherel theorem and Proposition \ref{pr:03} it follows that for any $j(1\leq j \leq N)$
\[
\| Q_{\nu} - Q_{\mu}\|_{L^2(\mathbb{R}^{n})} = \| \widehat{Q}_{\nu} - \widehat{Q}_{\mu}\|_{L^2(\mathbb{R}^{n})}
\geq  \| {T^{(\nu)}_{j}- T^{(\mu)}_{j}}\|_{L^2(\mathbb{T}^{n},W_{h_{j}})},
\]
 where $h_{j}$ is defined by \eqref{eq:orpr} and
 \[
  T^{(\nu)}_{j}(\bt) =  \sum_{\bk \in {\cal G}_{\nu,j}} \gamma^{(\nu,j)}_{ \bk} e^{2\pi i \langle \bk, \bt\rangle}.
  \]
 Thus the sequence $\{ T^{(\nu)}_{j} \}_{\nu = 1}^{\infty}$ is a Cauchy sequence in the space $L^2(\mathbb{T}^{n},W_{h_{j}})$. Let $m_{j}(f) = \lim_{\nu \to \infty} T^{(\nu)}_{j}$, where the limit is taken in the space $L^2(\mathbb{T}^{n},W_{h_{j}})$.
We have that
\[
\widehat{Q}_{\nu}(\bt) = \sum_{j=1}^{N} T^{(\nu)}_{j}(\bt) \widehat{\varphi}_{j}(\bt)
\]
and $\lim_{\nu \to \infty} \widehat{Q}_{\nu} = \widehat{f}$ in
$L^2(\mathbb{R}^{n})$. Hence,
(\ref{eq:ksk}) holds.

The inequality \eqref{eq:ksk1} is proved by induction. Suppose that $N=2$. By Propositions \ref{pr:02} and \ref{pr:03} it follows that the set of generators $\{\varphi_{1}, h_{2} \}$ is orthogonal and $S(G) = S(\varphi_{1})\oplus S(h_{2})$. Hence, by Proposition \ref{pr:01} it follows that $$
\|f\|^{2}_{L^2(\IR^n)} = \|m_{1}(f)\|^{2}_{L^2(\mathbb{T}^{n}, W_{\varphi_{1}})} + \|m_{2}(f)\|^{2}_{L^2(\mathbb{T}^{n}, W_{h_{2}})}.$$
 On the other hand  we have that
\[
W_{h_{1}}(\bt) = [\widehat{h}_{1}, \widehat{h}_{1}](\bt) = \bigg[\widehat{\varphi}_{1} - \frac{[\widehat{\varphi}_{1},\widehat{\varphi}_{2}] }{[\widehat{\varphi}_{2},\widehat{\varphi}_{2}]} \widehat{\varphi}_{2}, \widehat{\varphi}_{1} - \frac{[\widehat{\varphi}_{1},\widehat{\varphi}_{2}] }{[\widehat{\varphi}_{2},\widehat{\varphi}_{2}]} \widehat{\varphi}_{2}\bigg] (\bt)
\]
\[
= [\widehat{\varphi}_{1}, \widehat{\varphi}_{1}](\bt) - \frac{|[\widehat{\varphi}_{1},\widehat{\varphi}_{2}](\bt) |^{2}}{[\widehat{\varphi}_{2},\widehat{\varphi}_{2}](\bt)} \leq W_{\varphi_{1}}(\bt).
\]
Thus the inequality \eqref{eq:ksk1} is proved for $N=2$.\\
Suppose that the inequality \eqref{eq:ksk1} is true for any $g-$minimal FSI $S(G)$ with $\text{card}\,G =N$. Let $S(\Omega)$ be a $g-$minimal FSI such that
$ \Omega = \{\phi_{k}\}_{k=1}^{N+1}$ and let
\begin{equation}\label{eq:fs1}
\widehat{\varphi}_{j} = \widehat{\phi}_{j} - \frac{[\widehat{\phi}_{j},\widehat{\phi}_{N+1}] }{[\widehat{\phi}_{N+1},\widehat{\phi}_{N+1}]} \widehat{\phi}_{N+1}, \qquad 1\leq j \leq N.
\end{equation}
It is clear that $S(G)\, \bot\, S(\phi_{N+1})$, where $G = \{\varphi_{j}\}_{j=1}^{N}$ and that \newline  $S(G)\, \oplus\, S(\phi_{N+1}) \subseteq S(\Omega)$.
On the other hand ${\phi}_{j} \in S(G)\, \oplus\, S(\phi_{N+1})$ for any $1\leq j \leq N$. Thus $ S(G)\, \oplus\, S(\phi_{N+1}) = S(\Omega)$ and we have that for any $f\in S(\Omega)$
\[
\widehat{f} = \sum_{j=1}^{N} m_{j}(f) + \frac{[\widehat{f},\widehat{\phi}_{N+1}] }{[\widehat{\phi}_{N+1},\widehat{\phi}_{N+1}]} \widehat{\phi}_{N+1},
\]
where $m_{j}(f)$ are defined   as in \eqref{eq:ksk} and $\widehat{h}_{j}, 1\leq j \leq N$ are defined by \eqref{eq:orpr}.
 It is easy to check that for any $1\leq j \leq N$
 \[
 \widehat{\phi}_{j} -  \widehat{P_{\Omega^{(j)}}(\phi_{j})} = \widehat{\phi}_{j} -  \widehat{P_{\Omega^{(j)}}(\varphi_{j})} - \frac{[\widehat{\phi}_{j},\widehat{\phi}_{N+1}] }{[\widehat{\phi}_{N+1},\widehat{\phi}_{N+1}]} \widehat{\phi}_{N+1} = \widehat{\varphi}_{j} -  \widehat{P_{G^{(j)}}(\varphi_{j})} = \widehat{h}_{j}.
 \]
If we put
\[
\widehat{h}_{N+1} = \widehat{\phi}_{N+1} -  \widehat{P_{\Omega^{(N+1)}}(\phi_{N+1})}
 \]
 then as above we check that $W_{h_{N+1}}(\bt) \leq  W_{\phi_{N+1}}(\bt)$ and finish the proof using the inductive supposition.

The second part of the theorem is an easy consequence of Propositions \ref{pr:03}-\ref{pr:02}. For a fixed $j(1\leq j \leq N)$ we consider a rearrangement $ \{\varphi_{k_{l}}\}_{l=1}^{N} $ of the generators such that $\varphi_{k_{N}} = \varphi_{j} $. Applying the orthogonalization procedure on the set of generators $ \{\varphi_{k_{l}}\}_{l=1}^{N} $ we obtain an orthogonal set of generators $ \{\psi_{k}\}_{k=1}^{N} $ such that $\psi_{N} = h_{j}$. By Proposition \ref{pr:02}
it follows that $S(G) = \oplus_{k=1}^{N} S(\psi_{k}).$ Thus for any $m\in L^{2}(\IT^{n}, W_{{h}_{j}})$ if follows that $m \widehat{h}_{j}  \in \widehat{S(G)}$ and by \eqref{eq:orpr} the proof is finished.

\section{Compactly supported generators}

\

In \cite{KSK:1} it was established that shifts of a compactly supported generator in $\IR$, after deleting a certain number of them, constitute a summation basis with respect to the Abel-Poisson and some Cesaro methods of summation in the corresponding PSI space. A survey about the results on basis properties in weighted mean spaces of some incomplete orthonormal systems can be found in \cite{KZ:1}.

Let $\varphi\in L^{2}(\mathbb{R}^{n})$ be a compactly supported complex-valued
function.  We  suppose that the support of a function $f \in
L^{2}(\mathbb{R}^{n})$ is defined only modulo a null set as  $\{\x\in \mathbb{R}^{n}:
f(\x)\neq 0\}.$ It is well-known that $W_{\varphi}$ is a trigonometric polynomial on $\IT^{n}$. The following statement is a slight generalization of the mentioned fact.

\begin{prop}\label{pro:1} Let $\psi,\varphi\in L^{2}(\mathbb{R}^{n})$ be compactly supported complex-valued
	functions and
	\[
	u(\bt):= [\widehat{\psi},\widehat{\varphi}](\bt),\quad (\bt\in\mathbb{R}^{n}).
	\]
	Then there exists $N_{0}\in \mathbb{N}$ such that the Fourier coefficients $c_{\bk}(u) =0 $  for all $|\bk|>N_{0}$.
\end{prop}
\begin{proof}
	The claim holds because of the following elementary relations: for any $ \bk\in \mathbb{Z}^{n}$,
	\begin{equation*}\label{fc:2}
	c_{\bk}(u) = \int_{\IT^{n}}u(\bt)e^{-2\pi i\langle{\mathbf k}, {\bt}\rangle}d\bt
 =\int_{\mathbb{R}^{n}}\psi(\bt)\varphi(\bt+\bk)d\bt, \quad (\bk\in \mathbb{Z}^{n}).
	\end{equation*}
\end{proof}
A complex-valued function $r$ on $\mathbb{T}^{n}$ is called a \emph{rational trigonometric function} if $r(\bt)=\frac{P(\bt)}{Q(\bt)},$ where $P,Q$ are trigonometric polynomials.

\begin{Theorem}\label{thm:2} Let
	$\Omega:=\{\varphi_j\}_{j=1}^N\subseteq L^2(\mathbb{R}^{n})$  be a finite  $g-$minimal  set  of compactly supported complex-valued
	functions. Let $g_1:=\varphi_1$ and the functions $g_1,\ldots,g_N$ be defined
	by the relations \eqref{orth:11}.
	Then, for any $k,1\leq k\leq N$ $[\widehat{g_k},\widehat{g_k}](\bt)$ is a rational trigonometric function such that
	\begin{equation}\label{trf:2}
	[\widehat{g_k},\widehat{g_k}](\bt)\leq [\widehat{\varphi_k},\widehat{\varphi_k}](\bt),\quad(\bt\in\mathbb{R}^{n}).
	\end{equation}
Moreover,
for any compactly supported complex-valued
	function $\psi \in L^{2}(\mathbb{R}^{n})$,
	$[\widehat{\psi},\widehat{\varphi}_{k}]$ is a rational trigonometric function.
\end{Theorem}
\begin{proof}
	For $k=1$ the statement is obvious. Let $m\in \mathbb{N}$ and $1<m<N$. Suppose that the statement holds for any $k\,\,(1\leq k\leq m)$.
	Hence, by Proposition \ref{pro:1} it follows that for any compactly supported complex-valued
	function $\psi \in L^{2}(\mathbb{R}^{n})$,
	\[
	[\widehat{\psi},\widehat{g}_{m+1}](\bt) = [\widehat{\psi},\widehat{\varphi}_{m+1}](\bt) - \sum_{k=1}^{m} \frac{[\widehat{\varphi}_{m+1},\widehat{g_{k}}](\bt) [\widehat{\psi},\widehat{g_{k}}](\bt)}{[\widehat{g_k},\widehat{g_k}](\bt)}
	\]
	is a rational trigonometric function. On the other hand,
	\[
	[\widehat{g}_{m+1},\widehat{g}_{m+1}](\bt) = [\widehat{\varphi}_{m+1},\widehat{\varphi}_{m+1}](\bt) - \sum_{k=1}^{m} \frac{|[\widehat{\varphi}_{m+1},\widehat{g_{k}}](\bt)|^{2} }{[\widehat{g_k},\widehat{g_k}](\bt)}
	\]
	is a rational trigonometric function. Now, the relation \eqref{trf:2} holds because
	\[
	\sum_{k=1}^{m} \frac{|[\widehat{\varphi}_{m+1},\widehat{g_{k}}](\bt)|^{2} }{[\widehat{g_k},\widehat{g_k}](\bt)} \geq 0.
	\]
	
\end{proof}

When $n = 1$ it is clear  for any $k, 1\leq k \leq N$ there exists a trigonometric polynomial $\omega_{k} \geq 0$ such that 
\[
C^{-1}_{k} \leq \frac{W_{g_k}(t)}{\omega_k(t)} \leq C_{k} \qquad \bt \in \IT
\]
for some $C_{k}> 1$. Which means (see  \cite{KSK:1})  that the system of functions consisting of the shifts of the generator $g_{k}$ have the same metric properties in the subspace $ S(g_{k})$ as the trigonometric system in the weighted norm space $L^{2}(\omega_k)$. Hence, using the results proved in \cite{K:1},  \cite{K:2},  \cite{K:3} we obtain that 
  if $\Omega:=\{\varphi_j\}_{j=1}^N\subseteq L^2(\mathbb{R})$  is a finite  $g-$minimal  set  of compactly supported complex-valued
	functions and $g_1:=\varphi_1$ and the functions $g_1,\ldots,g_N$ are defined
	by the relations \eqref{orth:11} then for any $k, 1\leq k \leq N$ deleting a certain number of elements from the system of functions consisting of the shifts of the generator $g_{k}$ we get an Abel-Poisson summation basis and $(C,\alpha)$ summation basis in $ S(g_{k})$ for sufficiently big $\alpha$.

 When $n\geq 2$ we do not know any description of zero-sets of  trigonometric
polynomials of $n$ variables. It should be one of the preliminary steps to extend results obtained in \cite{KSK:1}.

\end{document}